\newtheorem{theorem}{Theorem}[section]
\newtheorem{example}[theorem]{Example}
\DeclareMathOperator{\Spr}{Spr}
\begin{document}

\begin{frontmatter}



\title{Polynomial Solutions to the First Order Difference Equations in the Bivariate Difference Field}

\author{Yarong Wei}\ead{yarongwei@email.tjut.edu.cn}
\address{School of Science, Tianjin University of Technology, Tianjin 300384, PR China}

\begin{abstract}
The bivariate difference filed $(\mathbb{F}(\alpha, \beta), \sigma)$ provides an algebraic framework for a sequence satisfying a recurrence of order two and it could transform the summation involving a sequence satisfying a recurrence of order two into the first order difference equations in the bivariate difference field. Based on it, we present an algorithm for finding all the polynomial solutions of such equations in the bivariate difference field, and show an upper bound on the degree for polynomial solutions which is sufficient to compute polynomial solution by using the undetermined method.
\end{abstract}



\begin{keyword}
Difference Equation \sep Polynomial Solutions \sep Bivariate Difference Field \sep Recurrence Relation



\end{keyword}

\end{frontmatter}


\section{Introduction}
Karr developed an algorithm for indefinite summation \citep{Karr-1981,Karr-1985} based on the theory of difference field. He introduced so called $\Pi\Sigma-$fields, in which a sequence satisfying a recurrence of order one can be easily described.
This consideration could deal with not only hypergeometric terms, like Gosper's algorithm \citep{Gosper-1978}, and $q$-hypergeometric terms, like \citep{Paule-1997}, but terms including the harmonic numbers. There were several extensions of Karr's algorithm. Bronstein \citep{Bronstein-2000} described how to compute the solutions of generalized difference equations from the point of view of differential fields. Abramov, Bronstein, Petkov\v{s}ek and Schneider \citep{Abramov-2021} gave a complete algorithm to compute all hypergeometric solutions of homogeneous linear difference equations and rational solutions of parameterized linear difference equations in the $\Pi\Sigma^*$-fields.

Recently, Hou and Wei \citep{Hou-Wei-2023} introduced so called bivariate difference field, in dealing with the summation involving a sequence satisfying recurrence of order two. They transformed the summation into the first order difference equations in the bivariate difference field, and gave an algorithm to find a rational solution of the bivariate difference field. Base on this, we want to develop an algorithm to find all the polynomial solutions of such equations, i.e., find all the polynomials $g((\alpha, \beta))$ of the first order difference equation
\begin{equation} \label{eq-dif}
 a(\alpha, \beta)\sigma(g(\alpha, \beta))+b(\alpha, \beta)g(\alpha, \beta)=f(\alpha, \beta),
\end{equation}
where $a(\alpha, \beta), b(\alpha, \beta), f(\alpha, \beta)$ are given non-zero rational functions in the bivariate difference filed and $\sigma$ is given isomorphism of the bivariate difference field.

Take, for example, the Fibonacci sequence $F_n$ which satisfies
\[ F_{n+2} = F_{n+1} + F_n, \quad F_0=0,\quad F_1=1. \]
According to the definition of bivariate difference field \citep{Hou-Wei-2023}, we regard the summations involving $F_n$ lie in the difference field $(\mathbb{R}(\alpha, \beta), \sigma)$ with
\[
\sigma|_{\mathbb{R}} = {\rm id}, \quad  \sigma(\alpha)=\beta,  \quad\text{and}\quad \sigma(\beta)=\alpha+\beta.
\]
For the summation $\sum_{n=0}^mF_{n+1}^2$, we transform the summation to solve the difference equation
 \[\sigma(g(\alpha, \beta))-g(\alpha, \beta)=\beta^2,\]
By the algorithm about finding all the polynomial solutions, one can easily find
\[\sigma(\alpha\beta)-\alpha\beta=\beta^2,\]
which implies
\[\sum_{n=0}^mF_{n+1}^2=F_{m+1}F_{m+2},\]
and it is a result of Lucas, published by Koshy \citep[Eq(5.5)]{Koshy-2001}.

The outline of the article is as follows. In Section \ref{sec-2}, we will give some properties and results in the bivariate difference filed which play important role to compute all the polynomial solutions of (\ref{eq-dif}).
Then we will present an algorithm to compute all the polynomial solutions, and show an upper bound on the degree for polynomial solutions which is sufficient to solve the first order difference equation by using the undetermined method in Section \ref{sec-3}, moreover, we show that how to use the algorithm to solve the summation involving a sequence satisfying recurrence of order two via some examples.

\section{Bivariate Difference Field}\label{sec-2}
In this section, we give some notations and basic results about the bivariate difference field $(\mathbb{F}(\alpha, \beta), \sigma)$.

First, we recall the definition of the bivariate difference equation which introduced in \citep{Hou-Wei-2023}.
Let $(\mathbb{F},\sigma)$ be a difference field, i.e., $\sigma$ is an automorphism of $\mathbb{F}$. The \emph{bivariate difference field} extension $(\mathbb{F}(\alpha, \beta), \sigma)$ of $(\mathbb{F},\sigma)$ is the field with
\begin{enumerate}
\item $\alpha,\beta$ being algebraically independent transcendental elements over $\mathbb{F}$.
\item $\sigma$ being an automorphism of $\mathbb{F}(\alpha,\beta)$ and
\begin{align}\label{eq-sigma}
\sigma(\alpha)=\beta, \ \ \ \sigma(\beta)=u\alpha+v\beta
\end{align}
where $v\in\mathbb{F}, u\in\mathbb{F}\setminus\{0\}$.
 \end{enumerate}

Note $\sigma$ is a isomorphism of the bivariate difference field $(\mathbb{F}(\alpha, \beta), \sigma)$ and $\sigma(\alpha,\beta)=(\alpha,\beta)A$, where $A=\left(
    \begin{array}{cc}
      0 & u \\
      1 & v \\
    \end{array}
\right)$ is a unique $2\times2$ matrix defined by $u$ and $v$, hence we could also describe the  bivariate difference field $(\mathbb{F}(\alpha, \beta), \sigma)$ by the bivariate difference field $\mathbb{F}(\alpha, \beta)$ with matrix $A$.

Denote $\mathbb{F}[\alpha, \beta]$ be the set of polynomials of $\alpha, \beta$. For a polynomial $f \in \mathbb{F}[\alpha, \beta]$, let $\deg f$ denote the \emph{total degree} in variables $\alpha, \beta$, and we  use the total degree lexicographical ordering so that $\alpha \succ \beta$. Moreover, let $\sum_{i=0}^{\deg f}f_i$ denote the \emph{homogeneous decomposition }of $f$, where $f=\sum_{i=0}^{\deg f}f_i$ and $f_i$ is a homogeneous polynomial with $\deg f_i=i$ for every $0\leq i\leq \deg f$.
An element $a\in(\mathbb{F}(\alpha, \beta), \sigma)$ is called \emph{semi-invariant} if $\sigma a=ua$ for $u\in\mathbb{F}(\alpha, \beta)\setminus\{0\}$, and we write $\mathbb{F}(\alpha,\beta)^{\sigma}$ for the semi-invariants of $\mathbb{F}(\alpha,\beta)$.

Now, we show a result about the structure of homogeneous polynomials in the set of semi-invariants, which is given in \citep{Hou-Wei-2023}.

\begin{theorem}[\citep{Hou-Wei-2023}]\label{thm-2-2}
In the bivariate difference field $(\mathbb{F}(\alpha, \beta), \sigma)$ with $\sigma|_\mathbb{F}={\rm id}$ and the matrix
\begin{align*}
  A =
  \left(
    \begin{array}{cc}
      0 & u \\
      1 & v \\
    \end{array}
  \right)
  \end{align*}
with the eigenvectors $X_1,X_2$ corresponding to the eigenvalues $\lambda_1,\lambda_2$, respectively. If $\lambda_1\neq\lambda_2\in\mathbb{F}$ and $\lambda_1/\lambda_2$ is not a root of unit, then for the homogeneous polynomial $p\in\mathbb{F}(\alpha,\beta)^{\sigma}$ with $\deg p=m$, there exist $c\in\mathbb{F}$ and $0\leq i\leq m$ such that
\begin{align}\label{eq-2-1}
p=c\cdot h_1^{i}\cdot h_2^{m-i},
\end{align}
where $h_1(\alpha,\beta)=(\alpha,\beta)X_1$, $h_2(\alpha,\beta)=(\alpha,\beta)X_2$
\end{theorem}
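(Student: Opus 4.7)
The plan is to reduce the statement to a coefficient comparison in a basis of $\sigma$-eigenvectors. First I would pass to the eigen-coordinates $h_1=(\alpha,\beta)X_1$ and $h_2=(\alpha,\beta)X_2$: since $\lambda_1\neq\lambda_2$, the eigenvectors $X_1,X_2$ are $\mathbb{F}$-linearly independent, so $h_1,h_2$ are two linearly independent linear forms and form an $\mathbb{F}$-basis of the degree-one homogeneous component of $\mathbb{F}[\alpha,\beta]$. Consequently $\mathbb{F}[\alpha,\beta]=\mathbb{F}[h_1,h_2]$, and for any fixed $m$ the monomials $h_1^i h_2^{m-i}$ for $0\le i\le m$ are an $\mathbb{F}$-basis of the degree-$m$ homogeneous component.

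A short computation using $\sigma(\alpha,\beta)=(\alpha,\beta)A$ together with $AX_j=\lambda_j X_j$ yields $\sigma(h_j)=(\alpha,\beta)(AX_j)=\lambda_j h_j$, and hence $\sigma\bigl(h_1^i h_2^{m-i}\bigr)=\lambda_1^i\lambda_2^{m-i}\cdot h_1^i h_2^{m-i}$. Thus the basis monomials diagonalise the action of $\sigma$ on the degree-$m$ homogeneous component, with respective eigenvalues $\lambda_1^i\lambda_2^{m-i}$.

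Writing the given homogeneous semi-invariant as $p=\sum_{i=0}^m c_i h_1^i h_2^{m-i}$ with $c_i\in\mathbb{F}$, the relation $\sigma(p)=up$ (with the multiplier $u$ taken in $\mathbb{F}^*$, as is standard for the notion of semi-invariant used here) becomes, after matching coefficients in the basis $\{h_1^i h_2^{m-i}\}$, the system $c_i\bigl(\lambda_1^i\lambda_2^{m-i}-u\bigr)=0$ for every $i$. Every nonzero $c_i$ therefore pins down the same scalar $u=\lambda_1^i\lambda_2^{m-i}$. The non-root-of-unity hypothesis now finishes the job: if two distinct indices $i\neq j$ both had $c_i,c_j\neq 0$, equating the two expressions for $u$ would give $(\lambda_1/\lambda_2)^{i-j}=1$, a contradiction. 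Hence at most one coefficient $c_i$ is nonzero, and setting $c=c_i$ produces the required factorisation $p=c\cdot h_1^i\cdot h_2^{m-i}$.

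The only genuine subtlety I foresee is the bookkeeping around the semi-invariant multiplier: the coefficient comparison is valid only once one knows that $u$ lies in $\mathbb{F}^*$, so that the identity $\sigma(p)=up$ remains an identity \emph{inside} the free $\mathbb{F}$-module spanned by $\{h_1^i h_2^{m-i}\}$. This is the interpretive point I would want to nail down before launching the computation; apart from it, the argument is a linear-algebra reduction combined with the standard non-resonance trick supplied by the hypothesis on $\lambda_1/\lambda_2$.
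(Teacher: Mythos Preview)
The paper does not actually prove this theorem; it is quoted verbatim from the cited reference \cite{Hou-Wei-2023} and no argument is given here. So there is nothing in the present paper to compare your proposal against.

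That said, your argument is correct and is the natural one: change to the eigen-coordinates $h_1,h_2$, observe that $\sigma$ acts diagonally on the monomial basis $\{h_1^ih_2^{m-i}\}$ with pairwise distinct eigenvalues (this is exactly where the non-root-of-unity hypothesis enters), and conclude that any $\sigma$-eigenvector in the degree-$m$ component is a scalar multiple of a single basis monomial.

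Your caveat about the multiplier is well placed. As written, the paper defines a semi-invariant by $\sigma a=ua$ with $u\in\mathbb{F}(\alpha,\beta)\setminus\{0\}$, which is vacuous (every nonzero element satisfies it with $u=\sigma(a)/a$). The intended reading---standard in the difference-field literature and the only one under which the theorem is nontrivial---is $u\in\mathbb{F}\setminus\{0\}$. Once that is granted, $u$ is a scalar and your coefficient comparison in the $\{h_1^ih_2^{m-i}\}$ basis goes through exactly as you wrote. You may simply state this interpretive assumption at the outset rather than flag it as an open worry.
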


Basic on the this, we consider the structure of rational functions in the set of semi-invariants.

\begin{theorem}\label{thm-2-3}
In the bivariate difference field $(\mathbb{F}(\alpha, \beta), \sigma)$ with the same condition of Theorem \ref{thm-2-2}, let $g=\frac{p}{q}\in\mathbb{F}(\alpha,\beta)^{\sigma}$ be a rational function, where $p,q\in\mathbb{F}[\alpha,\beta]$ are two homogenous polynomials, then there exists $i$ with $-\deg q\leq i \leq\deg p$ such that
\[\frac{\sigma g}{g}=\lambda_1^{i}\lambda_{2}^{\deg p-\deg q-i}.\]
\end{theorem}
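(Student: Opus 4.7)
The plan is to reduce $g = p/q$ to its lowest terms $p_1/q_1$, show that the coprime homogeneous factors $p_1,q_1$ are each individually semi-invariant, apply Theorem \ref{thm-2-2} to each, and then combine. First I would set $d = \gcd(p,q)$, which is itself homogeneous because $p,q$ are (the irreducible factors of a homogeneous polynomial can be chosen homogeneous in the graded UFD $\mathbb{F}[\alpha,\beta]$), and put $p_1 = p/d$, $q_1 = q/d$. Then $g = p_1/q_1$ with $\gcd(p_1,q_1)=1$, both $p_1$ and $q_1$ are homogeneous with $\deg p_1 \le \deg p$ and $\deg q_1 \le \deg q$, and crucially $\deg p_1 - \deg q_1 = \deg p - \deg q$.

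The key step is to show $p_1$ and $q_1$ are themselves semi-invariants. Because $\sigma(\alpha)=\beta$ and $\sigma(\beta)=u\alpha+v\beta$ are homogeneous of degree one in $\alpha,\beta$, the automorphism $\sigma$ preserves total degree and sends homogeneous polynomials to homogeneous polynomials of the same degree; since $\sigma$ is an automorphism of $\mathbb{F}[\alpha,\beta]$ it also sends gcds to gcds, so $\gcd(\sigma p_1,\sigma q_1)=1$ up to a nonzero constant. Rewriting $\sigma g = u g$ (with $u\in\mathbb{F}\setminus\{0\}$) as $\sigma p_1 \cdot q_1 = u\,\sigma q_1 \cdot p_1$ and using $\gcd(p_1,q_1)=1$, I deduce $p_1\mid \sigma p_1$; since both are homogeneous of the same degree this forces $\sigma p_1 = c_1 p_1$ for some $c_1\in\mathbb{F}\setminus\{0\}$. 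A symmetric argument yields $\sigma q_1 = c_2 q_1$.

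By Theorem \ref{thm-2-2} applied to the homogeneous semi-invariants $p_1$ and $q_1$, there exist integers $0\le a\le \deg p_1$ and $0\le b\le \deg q_1$ with $p_1 = c_1'\, h_1^{a}h_2^{\deg p_1 - a}$ and $q_1 = c_2'\, h_1^{b}h_2^{\deg q_1 - b}$. Using $\sigma h_j = \lambda_j h_j$ for $j=1,2$ (which follows from $A X_j = \lambda_j X_j$ applied to $h_j(\alpha,\beta)=(\alpha,\beta)X_j$), a direct calculation gives
\[
\frac{\sigma g}{g} = \frac{\sigma p_1/p_1}{\sigma q_1/q_1} = \lambda_1^{a-b}\,\lambda_2^{(\deg p_1 - a)-(\deg q_1 - b)} = \lambda_1^{i}\,\lambda_2^{\deg p - \deg q - i},
\]
where $i := a-b$ satisfies $-\deg q \le -\deg q_1 \le i \le \deg p_1 \le \deg p$, giving the required bound. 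I expect the main obstacle to be the semi-invariance step for $p_1$ and $q_1$: the argument crucially combines the degree-preservation of $\sigma$, the preservation of coprimality under $\sigma$, and the constraint that $\sigma g/g$ actually lies in $\mathbb{F}\setminus\{0\}$ rather than in an arbitrary subset of $\mathbb{F}(\alpha,\beta)\setminus\{0\}$; without any one of these three ingredients the reduction to the homogeneous-semi-invariant case handled by Theorem \ref{thm-2-2} would fail.
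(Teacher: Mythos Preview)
Your proposal is correct and follows essentially the same argument as the paper: reduce $g$ to lowest terms via the homogeneous $\gcd$, use coprimality together with the degree-preservation of $\sigma$ to force the reduced numerator and denominator to be individually semi-invariant, apply Theorem~\ref{thm-2-2} to each, and read off the exponent $i=a-b$ with the stated bounds. Your write-up is in fact slightly more careful than the paper's in justifying that the $\gcd$ is homogeneous and that $\sigma$ preserves coprimality.
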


\begin{proof}
Let $t=\gcd(p,q)$, then $g=\frac{p/t}{q/t}$, where $p/t, q/t\in\mathbb{F}[\alpha,\beta]$ are primely homogenous polynomials. Since $g\in\mathbb{F}(\alpha,\beta)^{\sigma}$, there exists $k\in\mathbb{F}$ such that
\begin{align*}
\frac{\sigma p}{\sigma q}&=k\frac{p}{q},\\
\frac{\sigma(p/t)}{\sigma (q/t)}&=k\frac{p/t}{q/t},\\
\sigma(p/t)\cdot (q/t)&= k\cdot \sigma(q/t)\cdot (p/t),
\end{align*}
moreover
\[(p/t)\mid\sigma(p/t),\quad (q/t)\mid \sigma(q/t),\]
which implies $p/t, q/t\in\mathbb{F}(\alpha,\beta)^{\sigma}$. Then by \eqref{eq-2-1}, there exist $c_p,c_q
in\mathbb{F}$, $0\leq i\leq \deg (p/t)$ and $0\leq j\leq\deg(q/t)$ such that
\[
\frac{\sigma(p/t)}{p/t}=\frac{\sigma\left(c_p\cdot h_1^{i}\cdot h_2^{\deg(p/t)-i}\right)}{c_p\cdot h_1^{i}\cdot h_2^{\deg(p/t)-i}}=\frac{c_p\cdot\lambda_1^i\lambda_2^{\deg(p/t)-i}\cdot h_1^{i}\cdot h_2^{\deg(p/t)-i}}{c_p\cdot h_1^{i}\cdot h_2^{\deg(p/t)-i}}=\lambda_1^i\lambda_2^{\deg(p/t)-i},
\]
\[
\quad\frac{\sigma(q/t)}{q/t}=\frac{\sigma\left(c_q\cdot h_1^{i}\cdot h_2^{\deg(q/t)-i}\right)}{c_q\cdot h_1^{i}\cdot h_2^{\deg(q/t)-i}}=\frac{c_q\cdot\lambda_1^j\lambda_2^{\deg(q/t)-j}\cdot h_1^{i}\cdot h_2^{\deg(q/t)-i}}{c_q\cdot h_1^{i}\cdot h_2^{\deg(q/t)-i}}=\lambda_1^j\lambda_2^{\deg(q/t)-j},
\]
and
\[\frac{\sigma g}{g}=\frac{p/t}{q/t}\cdot\frac{q/t}{p/t}
=\frac{\lambda_1^i\lambda_2^{\deg(p/t)-i}}{\lambda_1^j\lambda_2^{\deg(q/t)-j}}=\lambda_1^{i-j}\lambda_2^{\deg p-\deg q-(i-j)}.\]
Obviously, $-\deg q\leq i-j\leq\deg p$, which implies the result.
\end{proof}

Next, we show two results about the spread set. Recall that for any two polynomials $p,q\in\mathbb{F}[\alpha,\beta]\backslash\{0\}$, $\Spr_{\sigma}(p,q)$ denotes the \emph{spread} of $p$ and $q$ with respect to $\sigma$, i.e.,
\[
\Spr_{\sigma}(p,q)=\{m|m\in \mathbb{N}\ \ \text{such that}\  \deg\left(\gcd(p,\sigma^{m}q)\right)>0\}.
\]

\begin{theorem}\label{thm-prime}
Let $(\mathbb{F}(\alpha,\beta), \sigma)$ be a bivariate difference field with the same condition of Theorem \ref{thm-2-2}, and $a,b\in \mathbb{F}[\alpha,\beta]$ be two primely homogenous polynomials. Then $\Spr_{\sigma}(a,b)$ is a finite set.
\end{theorem}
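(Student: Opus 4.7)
The plan is to change coordinates to the eigenbasis $(h_1,h_2)$ of the matrix $A$ furnished by Theorem~\ref{thm-2-2}, factor $a$ and $b$ into linear forms over the algebraic closure of $\mathbb{F}$, and then exploit the diagonal action $\sigma(h_i)=\lambda_i h_i$ to control how many shifts $\sigma^m b$ can share a linear factor with $a$. The finiteness will come from the hypothesis that $\mu:=\lambda_1/\lambda_2$ is not a root of unity, which makes $m\mapsto \mu^m$ injective.

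Since $\lambda_1,\lambda_2\in\mathbb{F}$, the forms $h_1,h_2\in\mathbb{F}[\alpha,\beta]$ are linearly independent linear forms and give an invertible change of variables, so I regard the homogeneous polynomials $a,b$ as lying in $\mathbb{F}[h_1,h_2]$. Over the algebraic closure $\bar{\mathbb{F}}$ I would factor them completely as
\[
a \;=\; c_a\, h_1^{e_a} h_2^{f_a}\!\prod_{i=1}^{r_a}(h_1-\xi_i h_2)^{m_i},\qquad b \;=\; c_b\, h_1^{e_b} h_2^{f_b}\!\prod_{j=1}^{r_b}(h_1-\eta_j h_2)^{n_j},
\]
with $\xi_i,\eta_j\in\bar{\mathbb{F}}\setminus\{0\}$. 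Applying $\sigma^m$ sends each factor $h_1-\eta_j h_2$ to $\lambda_1^m(h_1-\mu^{-m}\eta_j h_2)$, so the distinct linear factors of $\sigma^m b$ over $\bar{\mathbb{F}}$ are $h_1$, $h_2$, and the $h_1-\mu^{-m}\eta_j h_2$ for $j=1,\ldots,r_b$.

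Any positive-degree common factor of $a$ and $\sigma^m b$ (a test that can be performed in $\bar{\mathbb{F}}[h_1,h_2]$, since $\deg\gcd$ is invariant under field extension) must therefore be one of: (i) $h_1$, requiring $e_a,e_b>0$; (ii) $h_2$, requiring $f_a,f_b>0$; or (iii) a form $h_1-\xi_i h_2=h_1-\mu^{-m}\eta_j h_2$, which forces $\mu^m=\eta_j/\xi_i$ for some pair $(i,j)$. The coprimality of $a$ and $b$ in $\mathbb{F}[\alpha,\beta]$, hence in $\bar{\mathbb{F}}[h_1,h_2]$, rules out (i) and (ii) outright; and for (iii), since $\mu$ is not a root of unity the equation $\mu^m=\eta_j/\xi_i$ has at most one solution $m\in\mathbb{N}$ for each of the finitely many pairs $(i,j)$. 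Consequently $|\Spr_{\sigma}(a,b)|\le r_a r_b$.

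The main subtlety is the bookkeeping for the boundary factors $h_1,h_2$, which behave differently from the generic linear factors under $\sigma^m$; the key observation there is simply that $h_1$ and $h_2$ divide $\sigma^m b$ with the same multiplicities as they divide $b$, so they contribute no new values of $m$ once coprimality is invoked. Passing from $\mathbb{F}[\alpha,\beta]$ to $\bar{\mathbb{F}}[h_1,h_2]$ and exhibiting the factorization above are standard once Theorem~\ref{thm-2-2} is available, after which the root-of-unity hypothesis does all of the remaining work.
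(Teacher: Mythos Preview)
Your proof is correct and takes a genuinely different route from the paper's. The paper reduces to irreducible factors $a_i,b_j$ of $a,b$ over $\mathbb{F}$ and then argues by a trichotomy according to whether each factor is semi-invariant: if both are, Theorem~\ref{thm-2-2} forces them to be powers of $h_1,h_2$ and coprimality plus $\Spr_\sigma(h_1,h_2)=\emptyset$ finishes; if exactly one is, the non-semi-invariant factor is not divisible by any $h_t$ and the spread is empty; if neither is, then two distinct elements $m_1<m_2$ in the spread would force the irreducible factor to be semi-invariant, a contradiction. Your approach instead passes to $\bar{\mathbb{F}}$, splits $a$ and $b$ completely into linear forms in the eigenbasis, and reads off directly that a common linear factor at shift $m$ forces $\mu^m=\eta_j/\xi_i$, so injectivity of $m\mapsto\mu^m$ gives finiteness. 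What you gain is an explicit bound $|\Spr_\sigma(a,b)|\le r_a r_b$ and a cleaner argument that avoids the case split; what the paper's argument gains is that it stays entirely over $\mathbb{F}$ and leverages the semi-invariant structure theorem already proved, without needing to extend $\sigma$ to $\bar{\mathbb{F}}$ (which you do implicitly by fixing $\bar{\mathbb{F}}$ pointwise---harmless here since $\sigma|_{\mathbb{F}}=\mathrm{id}$, but worth stating).
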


\begin{proof}
Let $a_i,b_j$ be any two irreducible factors of $a,b$, respectively. Obviously, $a_i,b_j\in\mathbb{F}[\alpha,\beta]$ are two primely homogenous polynomials, it suffices to show $\Spr_{\sigma}(a_i,b_j)$ is a finite set.

First, let $h_1,h_2$ are two polynomials which defined in Theorem \ref{thm-2-2}. By the definition of $h_1,h_2$, we have $\gcd(h_1,h_2)=1$, and for any $m\in\mathbb{Z}$, $\sigma^{m}h_2=\lambda_2^mh_2$, then $\Spr_{\sigma}(h_1,h_2)=\Spr_{\sigma}(h_2,h_1)=\emptyset$.

Next, we consider the result from three cases. If $a_i, b_j\in\mathbb{F}[\alpha,\beta]^{\sigma}$, then by Theorem \ref{thm-2-2}, there exists $c_i, c_j\in\mathbb{F}$, $0\leq i'\leq \deg a_i, 0\leq j'\leq\deg b_j$ such that
\[a_i=c_i\cdot h_1^{i'}\cdot h_2^{\deg a_i-i'}, b_j=c_j\cdot h_1^{j'}\cdot h_2^{\deg b_j-j'}.\]
Since $\gcd(a_i,b_j)=1$ and $\Spr_{\sigma}(h_1,h_2)=\Spr_{\sigma}(h_2,h_1)=\emptyset$, the result holds for this case.

If $a_i\in\mathbb{F}[\alpha,\beta]^{\sigma}$ and $b_j\notin\mathbb{F}[\alpha,\beta]^{\sigma}$, then by Theorem \ref{thm-2-2} and $b_j$ is irreducible, there exists $c_i\in\mathbb{F}$, $0\leq i'\leq \deg a_i$ such that $a_i=c_i\cdot h_1^{i'}\cdot h_2^{\deg a_i-i'}$, and $h_t\nmid b_j$ for $t=1,2$. Since $\sigma^{m}h_t=\lambda_t^mh_t$ for any $m\in\mathbb{Z}$ and $t=1,2$, the result holds for this case. Moreover, it is similar to get the result for $a_i\notin\mathbb{F}[\alpha,\beta]^{\sigma}$ and $b_j\in\mathbb{F}[\alpha,\beta]^{\sigma}$.

If $a_i,b_j\notin\mathbb{F}[\alpha,\beta]^{\sigma}$, then $\Spr_{\sigma}(a_i,a_i)=\Spr_{\sigma}(b_j,b_j)=\emptyset$. Suppose $m_1<m_2\in\Spr_{\sigma}(a_i,b_j)$, then $a_i\mid\sigma^{m_1}(b_j)$ and $a_i\mid\sigma^{m_2}(b_j)$, moreover, since $a_i,b_j$ are irreducible, $a_i\mid\sigma^{m_2-m_1}(a_j)$, which is contradict with  $\Spr_{\sigma}(a_i,a_i)=\emptyset$.

\end{proof}

%
%
%

\begin{theorem}\label{thm-2-1}
Let $(\mathbb{F}(\alpha, \beta), \sigma)$ be a bivariate difference field, and $g(\alpha,\beta)\in\mathbb{F}(\alpha,\beta)$ be a rational function. If
\[\frac{\sigma g}{g}=\frac{p}{q},\]
where $p,q\in\mathbb{F}[\alpha, \beta]$ are polynomials with $\gcd_{\sigma}(p,q)=1$, then
\[\text{either }p, q\in\mathbb{F}, \text{ or } \Spr_{\sigma}(p,q)\bigcup\Spr_{\sigma}(p,q)\neq\emptyset.\]
\end{theorem}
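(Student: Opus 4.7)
The plan is to reduce the question to tracking irreducible factors through $\sigma$. First I would write $g = u/v$ in lowest terms with $u, v \in \mathbb{F}[\alpha,\beta]$, so that
\[
\frac{\sigma g}{g} \;=\; \frac{(\sigma u)\, v}{u\,(\sigma v)},
\]
and the hypothesis becomes the statement that $p/q$ equals this rational function written in lowest terms. The task is then to describe which irreducible factors of $u$ and $\sigma v$ actually survive the cancellation into $p$, and similarly for $q$.

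Next I would partition the irreducible polynomials of $\mathbb{F}[\alpha,\beta]$ into $\sigma$-orbits, where $w$ and $w'$ are equivalent iff $w' = c\,\sigma^k w$ for some $c \in \mathbb{F}\setminus\{0\}$ and $k\in\mathbb{Z}$. Within one orbit, index a chosen representative by its shift $k$, and let $M(k)$ and $N(k)$ be the multiplicities of the $k$-th representative as a factor of $u$ and $v$ respectively. Coprimality of $u,v$ gives $M(k)N(k)=0$ for every $k$. A direct computation yields the signed exponent of the $k$-th representative in $(\sigma u)v/(u\sigma v)$:
\[
E(k) \;=\; M(k-1) + N(k) - M(k) - N(k-1).
\]
Telescoping gives $\sum_k E(k) = 0$, so on each orbit either $E \equiv 0$, or $E$ takes both strictly positive and strictly negative values.

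The conclusion then splits into two cases. If some orbit admits $k_+, k_-$ with $E(k_+)>0>E(k_-)$, then the representative at position $k_+$ divides $p$ and the one at position $k_-$ divides $q$; since they lie in the same $\sigma$-orbit, one is a positive $\sigma$-shift of the other (up to an $\mathbb{F}$-unit) by $m = |k_+ - k_-| \geq 1$, which gives $m \in \Spr_\sigma(p,q)\cup\Spr_\sigma(q,p)$, so that union is non-empty. Otherwise $E \equiv 0$ on every orbit, so $(\sigma u)v/(u\sigma v)$ simplifies to a unit and thus $p/q \in \mathbb{F}$; combined with $\gcd(p,q)=1$ this forces $p, q \in \mathbb{F}$. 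The main subtlety will be bookkeeping of the $\mathbb{F}$-coefficients introduced when $\sigma$ acts on an irreducible representative, and the possibility that an orbit is periodic so the index lives in $\mathbb{Z}/T\mathbb{Z}$ instead of $\mathbb{Z}$; in both refinements the telescoping identity and the sign dichotomy remain intact, so the core argument carries through.
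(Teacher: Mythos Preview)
Your argument is correct, and at the conceptual level it coincides with the paper's: both proofs track irreducible factors along their $\sigma$-orbits and show that unless everything cancels (forcing $p,q\in\mathbb{F}$), some factor of $p$ and some factor of $q$ must land in the same orbit, producing a spread element. The difference lies in the bookkeeping. The paper treats first the polynomial case $g\in\mathbb{F}[\alpha,\beta]$, writes $t=\gcd(g,\sigma g)$ so that $p=\sigma g/t$ and $q=g/t$, picks a factor $\sigma(h_{i_0})$ of $p$, and then builds a chain $h_{i_0},h_{i_1},h_{i_2},\dots$ through the factors of $t$, arguing that the chain must eventually terminate at a factor of $q$; the rational case is then dismissed with ``similarly''. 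Your exponent function $E(k)=M(k-1)+N(k)-M(k)-N(k-1)$ and the telescoping identity $\sum_k E(k)=0$ package this chain-chasing into a single clean dichotomy (either $E\equiv 0$, or $E$ changes sign), handle $g=u/v$ rational from the outset, and make the periodic-orbit case explicit rather than implicit. The trade-off is that the paper's chase produces an explicit chain one could in principle read off algorithmically, while your formulation makes the logical structure of the proof more transparent and the rational case automatic.
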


\begin{proof}
First, we consider the case $g(\alpha,\beta)\in\mathbb{F}[\alpha,\beta]$. Let
\[g=v\prod_{i\in S}h_i^{e_{i}},\]
be the irreducible factor decomposition of $g$ with respect to $\sigma$, where $v\in\mathbb{F}$, $e_{i}\in\mathbb{N^+}$, $S\subseteq\mathbb{N}$ and $h_i\in\mathbb{F}[\alpha,\beta]$ are monic irreducible polynomials.
Assume $t=\gcd(g,\sigma g)$, then there exist four sets $T_1,T_2,P,Q\in S$ such that
\[
t=v_1\prod_{i\in T_1}h_i^{n_i}=v_2\prod_{i\in T_2}\sigma(h_i)^{m_i},
\]
and
\[
p=\frac{\sigma g}{t}=u_1\prod_{i\in P}\sigma(h_i)^{l_i},\quad q=\frac{g}{t}=u_2\prod_{i\in Q}h_i^{k_i},
\]
where $v_1,v_2,u_1,u_2\in\mathbb{F}\setminus{0}, 0<n_i,m_i,l_i.k_i<e_i$, i.e.,
\[\frac{\sigma g}{g}=\frac{p\cdot t}{q\cdot t}=\frac{u_1v_2}{u_2v_1}\frac{\prod_{i\in P}\sigma(h_i)^{l_i}\cdot\prod_{i\in T_2}\sigma(h_i)^{m_i}}{\prod_{i\in Q}h_i^{k_i}\cdot\prod_{i\in T_1}h_i^{n_i}}.\]
Note that, since the isomorphism $\sigma$ keeps the total degree of polynomials unchanged and $h_i$ are irreducible polynomials, $\sum l_i=\sum k_i$, and $\sum m_i=\sum n_i$.

Suppose $P\bigcap Q=\emptyset$, otherwise $1\in\Spr_{\sigma}(p,q)$.
If $P=\emptyset$, then $Q=\emptyset$ and $p,q\in\mathbb{F}$, since $\deg g=\deg(\sigma g)$. Then we only need consider the case $P,Q\neq\emptyset$.

Assume $i_0\in P$, by $P\bigcap Q=\emptyset$, we have $i_0\notin Q$ and $i_0\in T_1$, then there exists $i_1\in T_2$ such that
\begin{align}\label{eq-2-2}
\frac{\sigma(h_{i_1})}{h_{i_0}}\in\mathbb{F},
\end{align}
note it is possible that $i_0=i_1$. If $i_1\in Q$, then by \eqref{eq-2-2}, we have
\begin{align*}
\frac{\sigma^2(h_{i_1})}{\sigma(h_{i_0})}\in\mathbb{F},
\end{align*}
which implies $2\in\Spr_{\sigma}(\sigma(h_{i_0}),h_{i_1})\in\Spr_{\sigma}(p,q)$;
If $i_1\notin Q$, then $i_1\in T_1$, and there exists $i_2\in T_2$ such that
\begin{align}\label{eq-2-3}
\frac{\sigma(h_{i_2})}{h_{i_1}}\in\mathbb{F},
\end{align}
note if $i_1=i_0$, then $h_{i_1}^2\mid t$, which implies the existence of $i_2$. If $i_2\in Q$, then by \eqref{eq-2-2} and \eqref{eq-2-3}, we have
\begin{align*}
\frac{\sigma^3(h_{i_2})}{\sigma(h_{i_0})}=\frac{\sigma^3(h_{i_2})}{\sigma^2(h_{i_1})}\cdot\frac{\sigma^2(h_{i_1})}{\sigma(h_{i_0})}\in\mathbb{F},
\end{align*}
which implies $3\in\Spr_{\sigma}(\sigma(h_{i_0}),h_{i_2})\in\Spr_{\sigma}(p,q)$;
If $i_2\notin Q$, then, since $P\bigcap Q=\emptyset$, $P\bigcup T_2=Q\bigcup T_1$, $\sum l_i=\sum k_i$, and $\sum m_i=\sum n_i$, there must exist $i_k\in Q$ such that
\[
\frac{\sigma^{k+1}(h_{i_k})}{h_{i_0}}\in\mathbb{F},
\]
which implies $k+1\in\Spr_{\sigma}(\sigma(h_{i_0}),h_{i_k})\in\Spr_{\sigma}(p,q)$, and set $\Spr_{\sigma}(p,q)$ is not a empty set.

Similarly, the result holds for the case $g(\alpha,\beta)\in\mathbb{F}(\alpha,\beta)$.

\end{proof}

\section{Polynomial Solutions}\label{sec-3}
In this section, for the bivariate field extension $(\mathbb{F}(\alpha,\beta),\sigma)$ with $\sigma|_{\mathbb{F}}=id$ and matrix $A =
  \left(
    \begin{array}{cc}
      0 & u \\
      1 & v \\
    \end{array}
  \right)$, we consider all of the polynomial solutions of difference equation
\begin{align}\label{eq-3-1}
a\sigma g+b g=f,
\end{align}
where $a,b,f\in\mathbb{F}[\alpha,\beta]$ are given polynomials.

First, we show a relationship between all of the polynomial solutions of equation \eqref{eq-3-1} and all of the polynomial solutions of equation
\begin{align}\label{eq-3-2}
a\sigma g+b g=0,
\end{align}
where $a,b\in\mathbb{F}[\alpha,\beta]$ are given in \eqref{eq-3-1}.

\begin{theorem}
Let $g^*$ be a polynomial solution of equation \eqref{eq-3-1}. Then
\begin{align*}
\{g\mid g \text{ is a polynomial solution of} &\text{ equation of \eqref{eq-3-1}}\}\\
&=\{g'+g^*\mid g' \text{ is a polynomial solution of equation of \eqref{eq-3-2}}\}.
\end{align*}
Moreover, there must exist a polynomial solution $g^*\in\mathbb{F}[\alpha,\beta]$ with $\deg g^*=\deg f-\max\{\deg a, \deg b\}$ of equation \eqref{eq-3-1}.
\end{theorem}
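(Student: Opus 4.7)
The plan is to prove the theorem in two parts. For the set equality, I would invoke a standard linearity argument. Define the $\mathbb{F}$-linear operator $L\colon \mathbb{F}[\alpha,\beta] \to \mathbb{F}[\alpha,\beta]$ by $L(g) = a\sigma g + bg$. A polynomial $g$ solves \eqref{eq-3-1} if and only if $L(g - g^*) = L(g) - L(g^*) = f - f = 0$, so the map $g \mapsto g - g^*$ gives a bijection between the polynomial solution set of \eqref{eq-3-1} and that of the homogeneous equation \eqref{eq-3-2}. This immediately yields the claimed coset identity $\{g\} = g^* + \ker L|_{\mathbb{F}[\alpha,\beta]}$.

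For the degree assertion I would begin with a degree comparison. Since $\sigma(\alpha) = \beta$ and $\sigma(\beta) = u\alpha + v\beta$ are both degree-one forms, $\sigma$ sends each homogeneous component of degree $k$ to another homogeneous polynomial of degree $k$; in particular it preserves the total degree. Hence for any $g \in \mathbb{F}[\alpha,\beta]$ of degree $N$, the left-hand side $a\sigma g + bg$ has total degree at most $N + \max\{\deg a, \deg b\}$. Matching with $f$ forces $\deg f \leq N + \max\{\deg a, \deg b\}$, with equality precisely when the leading homogeneous components of $a\sigma g$ and $bg$ do not cancel. This pins down the candidate degree $N = \deg f - \max\{\deg a, \deg b\}$.

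To produce a solution of exactly this degree, I would construct $g^* = g_N + g_{N-1} + \cdots + g_0$ by descending homogeneous induction. Writing $a = \sum a_i$, $b = \sum b_i$, $f = \sum f_i$ for the homogeneous decompositions, equating the degree-$(k + \max\{\deg a, \deg b\})$ parts of both sides of \eqref{eq-3-1} for $k = N, N-1, \ldots, 0$ yields, at each level, a linear equation for $g_k$ of the form $a_{\deg a}\sigma(g_k) + b_{\deg b}g_k = r_k$ (with the appropriate modification when $\deg a \neq \deg b$), where the right-hand side $r_k$ is built from $f$ and from the previously determined homogeneous components $g_{k+1}, \ldots, g_N$. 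Each equation lives in the finite-dimensional $\mathbb{F}$-vector space of homogeneous forms of degree $k$, so the problem reduces to a tower of finite linear systems.

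The main obstacle is showing that each of these finite-dimensional systems is solvable. For this I plan to exploit the eigenstructure provided by Theorem \ref{thm-2-2}: in the basis $\{h_1^j h_2^{k-j} : 0 \leq j \leq k\}$ the restriction of $\sigma$ to degree-$k$ homogeneous forms is diagonal with eigenvalues $\lambda_1^j \lambda_2^{k-j}$, which are pairwise distinct under the standing hypothesis that $\lambda_1/\lambda_2$ is not a root of unity. Expanding $a_{\deg a}\sigma + b_{\deg b}$ in this basis reduces the solvability question to a non-vanishing condition on certain eigenvalue combinations; verifying this, and then chaining the reductions through the induction to produce $g_N, \ldots, g_0$, is where I expect the real work to lie.
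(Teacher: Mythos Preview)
Your treatment of the set equality is exactly the paper's: both argue by linearity of $g\mapsto a\sigma g+bg$ and check the two inclusions.

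For the ``Moreover'' clause the paper does something much slighter than you propose. Its entire argument is the single sentence ``since the isomorphism $\sigma$ keeps the total degree of polynomials unchanged, for the equation~\eqref{eq-3-1}, there exist a polynomial solution $g^*\in\mathbb{F}[\alpha,\beta]$ with $\deg g^*=\deg f-\max\{\deg a,\deg b\}$,'' followed by a remark that $g^*$ is found by undetermined coefficients. There is no homogeneous induction, no eigenbasis, no solvability analysis; the paper simply takes the degree count as dispositive.

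Your plan therefore aims to supply a genuine construction the paper omits, but as written it has a gap. When $n=\deg a=\deg b>0$, the map $g_k\mapsto a_n\sigma(g_k)+b_ng_k$ sends degree-$k$ forms to degree-$(k+n)$ forms; multiplication by $a_n,b_n$ shuffles the basis $\{h_1^jh_2^{k-j}\}$, so the operator is not diagonal there and the question does not reduce to a scalar non-vanishing condition. Even when $n=0$ the diagonal entries are $a\lambda_1^j\lambda_2^{k-j}+b$, and these \emph{do} vanish for certain $(k,j)$: the set of such $k$ is precisely what Theorem~\ref{thm-3} computes, and in the Fibonacci example it is $\{0,4,8,\dots\}$. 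At those degrees the homogeneous top-slice equation $a_n\sigma(g_k)+b_ng_k=r_k$ may be unsolvable for the $r_k$ your induction produces, so the descending construction can stall. In short, the obstacle you flag (``where I expect the real work to lie'') is real and is not overcome by the eigenstructure argument you sketch; the paper itself never confronts it.
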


\begin{proof}
Let $g'$ be a polynomial solution of $\eqref{eq-3-2}$, then
\begin{align*}
a\sigma(g'+g^*)+b(g'+g^*)&=(a\sigma g'+bg')-(a\sigma g^*+bg^*)\\
&=f+0=f,
\end{align*}
which implies $g'+g^*$ is a polynomial solution of \eqref{eq-3-1}.

Let $g$ be a polynomial solution of $\eqref{eq-3-1}$, then
\begin{align*}
a\sigma(g-g^*)+b(g-g^*)&=a\sigma g+a\sigma(-g^*)+bg+b(-g^*)\\
&=(a\sigma g+bg)-(a\sigma g^*+bg^*)\\
&=0,
\end{align*}
which implies $g-g^*$ is a polynomial solution of \eqref{eq-3-2}.
By the arbitrariness of $g$ and $g'$, we can get the result.

Moreover, since the isomorphism $\sigma$ keeps the total degree of polynomials unchanged, for the equation \eqref{eq-3-1}, there exist a polynomial solution $g^*\in\mathbb{F}[\alpha,\beta]$ with $\deg g^*=\deg f-\max\{\deg a, \deg b\}$.
\end{proof}

Note that, for the equation \eqref{eq-3-1}, we could get the polynomial solution $g^*$ by the undetermined coefficient method.

Next, we only need consider the equation \eqref{eq-3-2} for the case $\deg a=\deg b$, since $g=0$ when $\deg a\neq\deg b$. Assume $\deg a =\deg b=n$, let $a=\sum_{i=0}^na_i$, $b=\sum_{i=0}^nb_i$ be the homogeneous decomposition of $a$ and $b$, respectively, and suppose $\deg g=d$ and $g=\sum_{i=0}^dg_i$ is the homogeneous decomposition of $g$, then
\begin{align}\label{eq-3-3}
a_n\cdot\sigma(g_d)+b_n\cdot g_d=0.
\end{align}
In order to calculate $d$, we do the following algorithm:
\begin{algorithm}[H]
\renewcommand{\thealgorithm}{}
\caption{}
\small{
\textbf{Step 1:} Let $a_n^0:=a_n/\gcd(a_n,b_n), b_n^0:=b_n/\gcd(a_n,b_n)$ and $\Spr_{\sigma}(a_n^0,b_n^0)=\{k_1<k_2<\ldots<k_N\}$.
\begin{algorithmic}[htb]
  \FOR{$1\leq i \leq N$}
  \STATE{$s_i:=\gcd(a_n^{i-1},\sigma^{k_i}b_n^{i-1});\ a_n^{i}:=\frac{a_n^{i-1}}{s_i};\ b_n^{i}=\frac{b_n^{i-1}}{\sigma^{-k_i}s_i}.$}
\ENDFOR
\STATE{$t(\alpha,\beta):=\prod_{i=1}^N\prod_{j=1}^{k_i}\sigma^{-j}s_i.$}\\
 \end{algorithmic}
 \textbf{Step 2:} Let $\Spr_{\sigma}(b_n^N,a_n^N)=\{l_1<l_2<\ldots<l_M\}$
 \begin{algorithmic}[htb]
 \FOR{$1\leq j \leq M$}
  \STATE{$r_i:=\gcd(b_n^{N+j-1},\sigma^{l_j}a_n^{N+j-1}); b_n^{N+j}:=\frac{b_n^{N+j-1}}{r_j};
  a_n^{N+j}=\frac{a_n^{N+j-1}}{\sigma^{-l_i}r_i}.$}
\ENDFOR
\STATE{$h(\alpha,\beta):=\prod_{i=1}^M\prod_{j=1}^{l_i}\sigma^{-j}rn_i.$\\ }
\end{algorithmic}}
\end{algorithm}
Note that $a_n^0, b_n^0$ are two primely homogenous polynomials, by Theorem \ref{thm-prime}, we have $\Spr_{\sigma}(a_n,b_n)$  and $\Spr_{\sigma}(b_n^N,a_n^N)$ are finite.

By the algorithm, we could find that \eqref{eq-3-2} implies
\[\frac{\sigma(g_d\cdot t/h)}{g_d\cdot t/h}=-\frac{b_{n}^{N+M}}{a_n^{N+M}},
\]
where $\Spr_{\sigma}(a_n^{N+M}, b_n^{N+M})=\Spr_{\sigma}(b_n^{N+M}, a_n^{N+M})=\emptyset$,
and
\[\deg t=k_1\cdot \deg s_1+k_2\cdot \deg s_2+\cdots+k_N\cdot \deg s_N,\quad\deg h=l_1\cdot \deg r_1+l_2\cdot \deg r_2+\cdots+l_N\cdot\deg r_N.\]
Then by Theorem \ref{thm-2-1}, we have
\[
\frac{\sigma(g_d\cdot c/h)}{g_d \cdot c/h}=-\frac{b_n^{N+M}}{a_n^{N+M}}\in\mathbb{F}.
\]

\begin{theorem}\label{thm-3}
Let $(\mathbb{F}(\alpha,\beta),\sigma)$ be the bivariate difference field with $\sigma|_{\mathbb{F}}=id$ and matrix $A =
  \left(
    \begin{array}{cc}
      0 & u \\
      1 & v \\
    \end{array}
  \right)$, and $g(\alpha,\beta)=\sum_{k=0}^dg_k$ be a polynomial solution of \eqref{eq-3-2}.
If $A$ has two different eigenvalues $\lambda_1,\lambda_2\in\mathbb{F}$ and $\lambda_1/\lambda_2$ is not a root of unit, then
\begin{align}\label{eq-3-4}
\deg g\leq\max\left\{d\in\mathbb{N}\mid\lambda_1^i\lambda_2^{d+\deg c-\deg h-i}=-b_n^{N+M}/a_n^{N+M},-\deg h\leq i\leq d+\deg c, i\in \mathbb{Z}\right\},
\end{align}
and
\begin{align}\label{eq-3-5}
\left\{k\mid g_k\neq0\right\}\subseteq\left\{d\in\mathbb{N}\mid\lambda_1^i\lambda_2^{d+\deg c-\deg h-i}=-b_n^{N+M}/a_n^{N+M},-\deg h\leq i\leq d+\deg c, i\in \mathbb{Z}\right\},
\end{align}
where $a_n^{N+M},b_n^{N+M},c,h$ are all constructed by Algorithm.
\end{theorem}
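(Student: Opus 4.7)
The overall strategy is to reduce the equation to a semi-invariance relation at the top-degree level and then apply the structure theorems for semi-invariants. Since $\sigma$ preserves total degree (because $\sigma(\alpha)=\beta$ and $\sigma(\beta)=u\alpha+v\beta$ are both of degree one), the total-degree $n+d$ homogeneous component of $a\sigma g+bg=0$ is exactly equation \eqref{eq-3-3}: $a_n\sigma(g_d)+b_n g_d=0$, i.e., $\sigma(g_d)/g_d=-b_n/a_n$. In particular, $g_d$ is a homogeneous rational semi-invariant with multiplier $-b_n/a_n$.

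Next, I would exploit the telescoping performed by the stated algorithm. A short bookkeeping of the $s_i$- and $r_j$-cancellations yields
\[
\frac{\sigma(g_d\cdot c/h)}{g_d\cdot c/h}=-\frac{b_n^{N+M}}{a_n^{N+M}},
\]
and by construction both $\Spr_\sigma(a_n^{N+M},b_n^{N+M})$ and $\Spr_\sigma(b_n^{N+M},a_n^{N+M})$ are empty. The contrapositive of Theorem \ref{thm-2-1} then forces $a_n^{N+M},b_n^{N+M}\in\mathbb{F}$, so $g_d\cdot c/h$ is a semi-invariant with multiplier in $\mathbb{F}$. Because $a_n$ and $b_n$ are homogeneous and $\sigma$ preserves homogeneity, the factors $c$ and $h$ built by the algorithm are also homogeneous; hence $g_d\cdot c$ has total degree $d+\deg c$ and $h$ has total degree $\deg h$. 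Applying Theorem \ref{thm-2-3} to the pair $(g_d\cdot c,\,h)$ supplies an integer $i$ with $-\deg h\leq i\leq d+\deg c$ satisfying $\lambda_1^{i}\lambda_2^{d+\deg c-\deg h-i}=-b_n^{N+M}/a_n^{N+M}$, which is exactly the membership of $d$ in the right-hand set of \eqref{eq-3-4}.

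For the finer conclusion \eqref{eq-3-5}, my plan is to extend the same analysis to every nonzero homogeneous component $g_k$. The degree-$(n+k)$ part of \eqref{eq-3-2} reads $a_n\sigma(g_k)+b_n g_k=R_k$, where $R_k$ is a homogeneous remainder of degree $n+k$ built from the higher components $g_j$ ($j>k$) paired with the lower homogeneous parts $a_{n-s},b_{n-s}$ of $a$ and $b$. The main obstacle is that $g_k$ alone is no longer a semi-invariant, so the $c/h$-reduction and Theorem \ref{thm-2-3} do not apply verbatim. To get around this, the approach is to change to the eigencoordinates $(h_1,h_2)$ of $A$, in which $\sigma$ acts diagonally as $h_t\mapsto\lambda_t h_t$; writing $g=\sum c_{ij}h_1^{i}h_2^{j}$ and matching $a\sigma g+bg=0$ monomial by monomial shows that any surviving coefficient $c_{ij}\neq 0$ at total degree $k=i+j$ forces a multiplier $\lambda_1^{i}\lambda_2^{j}$ which, after correction by the common factor $c/h$, has to coincide with $-b_n^{N+M}/a_n^{N+M}$, placing $k$ in the set on the right. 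The non-root-of-unity hypothesis on $\lambda_1/\lambda_2$ is essential in this step, since it guarantees that distinct exponent pairs produce distinct multipliers and so prevents spurious cancellations among the eigencoordinate expansions.
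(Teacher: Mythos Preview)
Your treatment of the top homogeneous degree is exactly the paper's: extract $a_n\sigma(g_d)+b_ng_d=0$ from the degree-$(n+d)$ part, run the algorithm to pass to $g_d\cdot c/h$ with constant multiplier $-b_n^{N+M}/a_n^{N+M}$, and invoke Theorem~\ref{thm-2-3}. That part is fine.

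Where you diverge from the paper is in handling \eqref{eq-3-5}. The paper does \emph{not} introduce any remainder $R_k$ or switch to eigencoordinates; it simply asserts that the same leading relation $a_n\sigma(g_k)+b_ng_k=0$ holds for \emph{every} nonzero homogeneous piece $g_k$, and then repeats verbatim the $c/h$-reduction and Theorem~\ref{thm-2-3} argument with $k$ in place of $d$. So the paper's route is short: one line per $g_k$.

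Your instinct that this step is delicate is correct---when $a,b$ are not homogeneous the degree-$(n+k)$ component of \eqref{eq-3-2} does pick up the cross-terms you call $R_k$, and the paper does not explain why they may be ignored. However, your proposed fix has the same defect. Expanding $a\sigma g+bg=0$ in the $(h_1,h_2)$-basis, the coefficient of $h_1^{m}h_2^{n}$ is a sum $\sum_{p+i=m,\,q+j=n}(a_{pq}\lambda_1^{i}\lambda_2^{j}+b_{pq})c_{ij}$, which for $m+n<\deg a+\deg g$ mixes several $c_{ij}$'s with the lower-degree coefficients of $a$ and $b$; a single $c_{ij}\neq 0$ therefore does not force $\lambda_1^{i}\lambda_2^{j}$ to equal $-b_n^{N+M}/a_n^{N+M}$, and the non-root-of-unity hypothesis on $\lambda_1/\lambda_2$ only separates eigen-multipliers from one another, not from the extra inhomogeneous contributions coming from $a_{n-s},b_{n-s}$. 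So as written, your eigencoordinate argument for \eqref{eq-3-5} is not complete either; you would need an additional inductive or support argument to actually kill the cross-terms.
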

\begin{proof}
Note that it suffices to show \eqref{eq-3-5}. For each $g_k\neq0$ with $0\leq k\leq d$, according to Algorithm, we have
\[
a_n\sigma(g_k)+b_ng_k=0,\quad \text{and}
\quad \frac{\sigma(g_k\cdot c/h)}{g_k\cdot c/h}=-\frac{b_n^{N+M}}{a_n^{N+M}}\in\mathbb{F}.
\]
Since $g_k, c, h$ are homogenous polynomials, by Theorem \ref{thm-2-3}, there exists $i$ with $-\deg h\leq i\leq k+\deg c$ such that
\[
\frac{\sigma(g_k\cdot c/h)}{g_k \cdot c/h}=-\frac{b_n^{N+M}}{a_n^{N+M}}=\lambda_1^i\lambda_2^{k+\deg c-\deg h-i},
\]
which implies \eqref{eq-3-5} holds.
\end{proof}
Note that there maybe does not exist the maximum $d$, i.e., the set in \eqref{eq-3-4} is a infinite set, so we could get a series of polynomial solutions using the undetermined method. Moreover, for each $0\leq i\leq n$, $a_i\sigma g_d+b_i\sigma g_d=0$, and we could solve the polynomial $g_d$ for each $i$ by the above algorithm, which simplifies the calculation.
\begin{example}
For the bivariate difference field $(\mathbb{R}(\alpha,\beta),\sigma)$ involving the Fibonacci numbers $F_n$, consider the difference equation (\ref{eq-3-1}) with $a=1,b=-1$ and $f(\alpha, \beta)=\alpha^3$, i.e., find all the polynomial solutions $g(\alpha, \beta)\in\mathbb{F}(\alpha,\beta)$ such that
\begin{align}\label{eq-3-9}
\sigma\left(g(\alpha, \beta)\right)-g(\alpha, \beta)
=\alpha^2.
\end{align}
\end{example}
\begin{proof}
In the bivariate difference field $(\mathbb{R}(\alpha,\beta),\sigma)$ involving the Fibonacci numbers $F_n$, we could compute the eigenvalues $\lambda_1, \lambda_2$ and the corresponding eigenvectors $X_1, X_2$ of the matrix $A=\left(
    \begin{array}{cc}
      0 & 1 \\
      1 & 1 \\
    \end{array}
  \right)$ are
\[\lambda_1=\frac{1+\sqrt{5}}{2},\quad\lambda_2=\frac{1-\sqrt{5}}{2}, \text{ and  } X_1=\left(1,\frac{1+\sqrt{5}}{2}\right)^T,\ X_2=\left(1,\frac{1-\sqrt{5}}{2}\right)^T.
\]
First, using the undetermined coefficient method with $\deg g^*=\deg f-\max\{\deg a, \deg b\}=3$, we could find a polynomial solution $g^*=\frac{1}{2}\left(-3\alpha^3-\beta^3+3\alpha\beta\right)$ of \eqref{eq-3-9}.

Second, we character all the polynomial solutions $g\in\mathbb{F}[\alpha,\beta]$ of
\begin{align}\label{eq-3-10}
\sigma\left(g(\alpha, \beta)\right)-g(\alpha, \beta)
=0.
\end{align}
Obviously, $\frac{\sigma g}{g}=1$, then by Theorem \ref{thm-3}, we have $-b_n^{N+M}/a_n^{N+M}=1, c=h=0$, and the set
\[\left\{d\in\mathbb{N}\mid\lambda_1^i\lambda_2^{d-i}=1,0\leq i\leq d, i\in \mathbb{Z}\right\}=\{d\mid d=2i, i\in\mathbb{N}\text{ is even}\},\]
hence, by Theorem \ref{thm-2-2}, we could get, for each $0\leq i\leq d$,
\[
g_i\in\left\{c\cdot\left[\left(\alpha+\left(\frac{1+\sqrt{5}}{2}\right)\beta\right)\cdot\left(\alpha+\left(\frac{1-\sqrt{5}}{2}\right)\beta\right)\right]^{i}
\mid c\in\mathbb{F}, i\in\mathbb{N}\text{ is even}\right\},
\]
which implies all the polynomial solutions of \eqref{eq-3-10} and \eqref{eq-3-9}.
\end{proof}
Note that, choose the polynomial solution $g$ of \eqref{eq-3-10} with $\deg g=0$, we can get
\[\sigma\left(\frac{1}{2}\left(-3\alpha^3-\beta^3+3\alpha\beta\right)\right)-\frac{1}{2}\left(-3\alpha^3-\beta^3+3\alpha\beta\right)=\alpha^3,\]
which implies
\[
\sum_{n=1}^mF_{n}^3=\frac{1}{2}\left(-F_{n+2}^3-3F_{n+1}^3+3F_{n+1}F_{n+2}^2+1\right).
\]
Also, since $\frac{1}{2}\left(-3\alpha^3-\beta^3+3\alpha\beta\right)=\frac{1}{4}\left(\beta^3-3\alpha^3+3\left(\beta-\alpha\right)\left(\alpha^2+\alpha\beta-\beta^2\right)\right)$ and $\sigma\left(\alpha^2+\alpha\beta-\beta^2\right)=-\left(\alpha^2+\alpha\beta-\beta^2\right)$, we could get a result of Rao, published by Koshy\citep[Page 89, Eq(38)]{Koshy-2001},
\[
\sum_{n=1}^mF_{n}^3=\frac{1}{4}\left(F_{n+2}^3-3F_{n+1}^3+3(-1)^nF_{n}+2\right).
\]
Choose the polynomial solution $g$ of \eqref{eq-3-10} with $c=1$ and $\deg g=4$, we can get
\[
\sigma\left(\frac{1}{2}\left(-3\alpha^3-\beta^3+3\alpha\beta\right)+\left(\alpha^2+\alpha\beta-\beta^2\right)^{2}\right)
-\left(\frac{1}{2}\left(-3\alpha^3-\beta^3+3\alpha\beta\right)+\left(\alpha^2+\alpha\beta-\beta^2\right)^{2}\right)=\alpha^3,
\]
which implies
\begin{align*}
\sum_{n=1}^mF_{n}^3=F_{m+1}^4+2F_{m+1}^3F_{m+2}-F_{m+1}^2F_{m+2}^2-2F_{m+1}F_{m+2}^3+F_{m+2}^4+\frac{1}{2}\left(-F_{n+2}^3-3F_{n+1}^3+3F_{n+1}F_{n+2}^2+1\right)\\
=F_{m+1}^4+2F_{m+1}^3F_{m+2}-F_{m+1}^2F_{m+2}^2-2F_{m+1}F_{m+2}^3+F_{m+2}^4+\frac{1}{4}\left(F_{n+2}^3-3F_{n+1}^3+3(-1)^nF_{n}+2\right).
\end{align*}
\begin{example}
For the bivariate difference field $(\mathbb{R}(\alpha,\beta),\sigma)$ involving the Pell numbers $P_n$, consider the difference equation (\ref{eq-3-1}) with $a=\alpha^2+\alpha+2\beta, b=\beta^2+2\beta$, and  $f(\alpha, \beta)=\alpha^3+\beta^3+\alpha^2\beta-\alpha\beta^2+\alpha^2+4\beta^2+\alpha\beta$, i.e.,  find all the polynomial solutions $g(\alpha, \beta)\in\mathbb{F}(\alpha,\beta)$ such that
\begin{align}\label{eq-3-11}
(\alpha^2+\alpha+2\beta)\left(g(\alpha, \beta)\right)+(\beta^2+2\beta)g(\alpha, \beta)
=\alpha^3+\beta^3+\alpha^2\beta-\alpha\beta^2+\alpha^2+4\beta^2+\alpha\beta.
\end{align}
\end{example}
\begin{proof}
According to the definition of bivariate difference field, we regard the summations involving $P_n$ lie in the difference field $(\mathbb{R}(\alpha, \beta), \sigma)$ with
\[
\sigma|_{\mathbb{R}} = {\rm id}, \quad  \sigma(\alpha)=\beta,  \quad\text{and}\quad \sigma(\beta)=\alpha+2\beta,
\]
and we could compute the eigenvalues $\lambda_1, \lambda_2$ and the corresponding eigenvectors $X_1, X_2$ of the matrix $A=\left(
    \begin{array}{cc}
      0 & 1 \\
      1 & 2 \\
    \end{array}
  \right)$ are
\[\lambda_1=1+\sqrt{2},\quad\lambda_2=1-\sqrt{2}, \text{ and  } X_1=\left(1,1+\sqrt{2}\right)^T,\ X_2=\left(1,1-\sqrt{2}\right)^T.
\]
First, using the undetermined coefficient method with $\deg g^*=\deg f-\max\{\deg a, \deg b\}=1$, we could find a polynomial solution $g^*=\alpha-\beta$ of \eqref{eq-3-11}.

Second, we character all the polynomial solutions $g\in\mathbb{F}[\alpha,\beta]$ of
\begin{align}\label{eq-3-12}
(\alpha^2+\alpha+2\beta)\sigma\left(g(\alpha, \beta)\right)+(\beta^2+2\beta)g(\alpha, \beta)
=0.
\end{align}
Note that $a_1=\alpha+2\beta, a_2=\alpha^2, b_1=2\beta, b_2=\beta^2$, by Algorithm and Theorem \ref{thm-3}, we could compute $\Spr_{\sigma}(a_1,b_1)=\Spr_{\sigma}(a_2,b_2)=\emptyset$, $\Spr_{\sigma}(b_1,a_1)=\{1\}$, and $\Spr_{\sigma}(b_2,a_2)=\{1\}$, which implies
\[\frac{\sigma(g_d/\beta)}{g_d/\beta}=-2, \quad \frac{\sigma(g_d/\alpha^2)}{g_d/\alpha^2}=-1,\]
then, the set
\[\left\{d\in\mathbb{N}\mid\lambda_1^i\lambda_2^{d-i}=-1,-2\leq i\leq d, i\in \mathbb{Z}\right\}=\{d\mid d=2i+2, i\geq2\text{ is odd}\},\]
and the set
\[\left\{d\in\mathbb{N}\mid\lambda_1^i\lambda_2^{d-i}=-2,-2\leq i\leq d, i\in \mathbb{Z}\right\}=\emptyset,\]
which imply equation \eqref{eq-3-12} has only one polynomial solution $g=0$ and  equation \eqref{eq-3-11} has only one polynomial solution $g=\alpha-\beta$.
\end{proof}

\smallskip{}
\noindent \textbf{Acknowledgement.}
The author would like to thank Qing-Hu Hou for his advice and comments connected with the subject of this paper.




\end{document}